\title[Noncompactly supported Breit-Wigner series]{The Breit-Wigner series for noncompactly supported potentials on the line}
\author{Aidan Backus}
\date{May 2020}
\newcommand{\NN}{\mathbf{N}}
\newcommand{\RR}{\mathbf{R}}
\newcommand{\CC}{\mathbf{C}}
\DeclareMathOperator{\ch}{ch}
\DeclareMathOperator{\Res}{Res}
\DeclareMathOperator{\supp}{supp}
\newcommand{\tr}{\operatorname{tr}}
\newcommand{\dfn}[1]{\emph{#1}\index{#1}}
\renewcommand{\Im}{\operatorname{Im}}
\newtheorem{theorem}{Theorem}[section]
\newtheorem{lemma}[theorem]{Lemma}
\newtheorem{proposition}[theorem]{Proposition}
\newtheorem{conjecture}[theorem]{Conjecture}
\theoremstyle{definition}
\newtheorem{definition}[theorem]{Definition}
\newtheorem*{ack}{Acknowledgements}
\newtheorem*{notate}{Notation}
\begin{document}
\begin{abstract}
We propose a conjecture stating that for resonances, $\lambda_j$, of a noncompactly supported potential, the series $\sum_j \Im \lambda_j/|\lambda_j|^2$ diverges. This series appears in the Breit-Wigner approximation for a compactly supported potential, in which case it converges. We provide heuristic motivation for this conjecture and prove it in several cases.
\end{abstract}

\maketitle

\section{Introduction and Conjectures}
In this note we propose a conjecture on the asymptotic distribution of scattering resonances of a one-dimensional Schr\"odinger equation with a noncompactly supported, super-exponentially decreasing potential. The conjecture is motivated by the Breit-Wigner formula for compactly supported potentials. We prove this conjecture for a large class of potentials, including any analytic potential for which a conjecture of Froese \cite[Conjecture 1.2]{froese1997asymptotic} holds.

Scattering resonances are by definition the poles of the meromorphic continuation of the resolvent family $R_V(\lambda) = (-D^2 + V - \lambda^2)^{-1}$. They also may be viewed as poles of the scattering matrix $S(\lambda)$.
We let $\Res V$ be the multiset of resonances of $V$, counted with multiplicity.

The operator $-iS'(\lambda)S^*(\lambda)$ is known as the \dfn{Eisenbud-Wigner time-delay operator}, which has physical significance \cite{jensen1981time}.
In the case of compactly supported potentials, the Breit-Wigner approximation relates the trace of the Eisenbud-Wigner operator of a compactly supported potential to a sum over resonances.
\begin{theorem}[Breit-Wigner approximation for compactly supported potentials]
Suppose that $V$ is compactly supported and $\lambda_0 \in \RR$. Then the series
\begin{equation}
\label{Breit-Wigner series}
\sum_{\lambda \in \Res V \setminus 0} \frac{|\Im \lambda|}{|\lambda - \lambda_0|^2} < \infty
\end{equation}
converges, and if $V$ is real-valued then we have
\begin{equation}
\label{Breit-Wigner formula}
\frac{1}{2\pi i} \tr S'(\lambda_0) S(\lambda_0)^* = -\frac{1}{\pi}|\ch \supp V| - \frac{1}{2\pi}\sum_{\lambda \in \Res V \setminus 0}\frac{\Im \lambda}{|\lambda - \lambda_0|^2}.
\end{equation}
\end{theorem}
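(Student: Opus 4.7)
The plan is to reduce everything to a Hadamard-type factorization of the scattering determinant $s(\lambda) := \det S(\lambda)$. Since $V$ is real-valued, $S(\lambda)$ is unitary on $\RR$, so $S(\lambda_0)^* = S(\lambda_0)^{-1}$, and Jacobi's formula gives
$$\tr\bigl(S'(\lambda_0)\, S(\lambda_0)^*\bigr) = \tr\bigl(S(\lambda_0)^{-1} S'(\lambda_0)\bigr) = \frac{s'(\lambda_0)}{s(\lambda_0)}.$$
The theorem is thereby reduced to computing the logarithmic derivative of $s$ at real $\lambda_0$ from an explicit product representation of $s$ in terms of the resonances.

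Next I would establish the factorization of $s$. For $V$ supported in an interval of length $L := |\ch \supp V|$, a Paley--Wiener--Titchmarsh analysis of the Jost solutions shows that $s(\lambda)$ extends meromorphically to $\CC$ with poles exactly at $\Res V$, and satisfies $|s(\lambda)| \le C$ for $\Im \lambda \ge 0$ together with $|s(\lambda)| \le C e^{2L|\Im \lambda|}$ for $\Im \lambda \le 0$. Hence $e^{2iL\lambda} s(\lambda)$ is bounded in the upper half-plane and of order $1$ in the lower, and a Phragm\'en--Lindel\"of and Hadamard argument yields a factorization
$$s(\lambda) = e^{-2iL\lambda} \prod_{\lambda_j \in \Res V \setminus 0} B(\lambda,\lambda_j),$$
where $B(\cdot,\lambda_j)$ is an elementary Blaschke-type factor vanishing at $\bar\lambda_j$ with a pole at $\lambda_j$, together with genus--1 convergence factors if needed.

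The convergence of the series \eqref{Breit-Wigner series} is then a Blaschke-type condition: from $|s(\lambda)| = 1$ on $\RR$ combined with the upper-half-plane bound, a Jensen or Carleman integral argument applied to the zeros $\{\bar\lambda_j\}$ of $s$ yields $\sum_{\lambda_j} |\Im \lambda_j|/(1+|\lambda_j|^2) < \infty$, from which \eqref{Breit-Wigner series} follows for any $\lambda_0 \in \RR$. With absolute convergence in hand, I would differentiate the factorization term-by-term, use the identity $\frac{1}{\lambda_0 - \bar\lambda_j} - \frac{1}{\lambda_0 - \lambda_j} = -\frac{2i \Im \lambda_j}{|\lambda_0 - \lambda_j|^2}$ valid for real $\lambda_0$, and divide by $2\pi i$ to arrive at \eqref{Breit-Wigner formula}.

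The principal obstacle will be pinning down the constants: verifying the Paley--Wiener growth bound with exponential type exactly $2L$, identifying the exponent of the leading factor with $|\ch \supp V|$ (rather than a fixed multiple of it), and handling the genus--1 convergence factors so that the final sum has the coefficient $-\frac{1}{2\pi}$ stated in \eqref{Breit-Wigner formula}. This requires an explicit representation of $s$ in terms of Jost solutions and careful bookkeeping, whereas the remaining steps are formal consequences of Hadamard factorization and Jacobi's formula.
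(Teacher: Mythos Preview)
The paper does not actually prove this theorem: immediately after the statement it writes ``For a proof, see \cite[Theorem 2.20]{dyatlov2019mathematical} or \cite[Theorem 3.24]{backus2020conjecture}'' and moves on. So there is no in-paper argument to compare against; the relevant comparison is with the cited references.

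Your outline is essentially the standard argument found there: reduce $\tr S'S^*$ to the logarithmic derivative of $\det S$ via Jacobi's formula, obtain a Hadamard/Blaschke-type factorization of $\det S$ using the Paley--Wiener growth of the Jost solutions, read off the exponential factor $e^{-2iL\lambda}$ with $L=|\ch\supp V|$, and differentiate term by term using $\frac{1}{\lambda_0-\bar\lambda_j}-\frac{1}{\lambda_0-\lambda_j}=-\frac{2i\,\Im\lambda_j}{|\lambda_0-\lambda_j|^2}$. The obstacles you flag (pinning down the type as exactly $2L$, tracking the constants and the genus-one convergence factors) are indeed where the work lies, and the references handle them by writing $\det S$ explicitly in terms of Wronskians of Jost solutions.

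Two small points to tighten. First, the convergence statement \eqref{Breit-Wigner series} is asserted for \emph{all} compactly supported $V$, not only real-valued ones; your Blaschke argument invokes $|s|=1$ on $\RR$, which uses unitarity and hence reality of $V$. For complex $V$ one instead appeals to the fact that the resonances are the zeroes of an entire function of exponential type (Cartwright class), whence the Lindel\"of/Blaschke condition $\sum_j |\Im\lambda_j|/(1+|\lambda_j|^2)<\infty$ still follows. Second, the bound ``$|s(\lambda)|\le C$ for $\Im\lambda\ge 0$'' is not literally true if $V$ has bound states, since these produce finitely many poles of $S$ on the positive imaginary axis; one must first divide them out by a finite Blaschke product before running the Phragm\'en--Lindel\"of step. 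Neither issue affects the overall strategy.
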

Here $|\ch\supp V|$ is the length of the convex hull of $\supp V$.
For a proof, see \cite[Theorem 2.20]{dyatlov2019mathematical} or \cite[Theorem 3.24]{backus2020conjecture}.
For a higher-dimensional generalization, see \cite{gerard1989breit}, \cite{petkov1999breit} and \cite{petkov2001semi}, or \cite{bruneau2003meromorphic}.

\begin{definition}
The \dfn{Breit-Wigner series} of an arbitrary potential $V$ is
$$B(V) = -\sum_{\lambda \in \Res V \setminus 0} \frac{\Im \lambda}{|\lambda|^2}.$$
\end{definition}
By (\ref{Breit-Wigner series}), $B(V)$ converges if $V$ is compactly supported.

The left-hand side, $\tr S'(\lambda)S^*(\lambda)$, of the Breit-Wigner formula (\ref{Breit-Wigner formula}) is a robust object that can be defined for a large class of decaying potentials $V$. Moreover, $\tr S'S^*$ depends continuously on $V$ in any reasonable topology, and it is not really affected by the support of $V$ as such.
Meanwhile, the right-hand side of (\ref{Breit-Wigner formula}) has a term, $|\ch \supp V|$, which is infinite when $V$ is not compactly supported, and an infinite series, so one can ask whether the right-hand side demonstrates a sort of ``cancellation of infinities." Thus, it is natural to ask whether the convergence of the Breit-Wigner series (\ref{Breit-Wigner series}) still holds when $V$ decays but is not compactly supported.

\begin{definition}
The potential $V$ is \dfn{super-exponentially decreasing} if for every $N \in \NN$, $|V(x)| \lesssim_N e^{-N|x|}$.
\end{definition}

If $V$ is a super-exponentially decreasing potential, then resonances may viewed as the zeroes of the determinant $\det(1+\sqrt V R_0\sqrt{|V|})$ \cite[\S3]{froese1997asymptotic}, and so depend continuously on the behavior of $V$ in compact sets. However, resonances may escape to infinity or otherwise be badly behaved globally. Therefore we cannot conclude that we can take the limit of the Breit-Wigner formula as the support becomes unbounded.
Yet, heuristically, one would hope that the Breit-Wigner series of a super-exponentially decreasing potential is a limit of Breit-Wigner series of compactly supported approximations. Moreover, in view of the stability of the left-hand side, we expect that as $|\ch \supp V| \to \infty$, $B(V) \to \infty$ as well, to achieve the aforementioned ``cancellation of infinities." Hence, we make the following bold conjecture.
\begin{conjecture}
\label{strong conjecture}
Let $V$ be a super-exponentially decreasing potential. The Breit-Wigner series $B(V)$ converges if and only if $V$ is compactly supported.
\end{conjecture}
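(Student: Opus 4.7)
\emph{Plan.} The direction ``$V$ compactly supported $\Rightarrow B(V)$ converges'' is immediate from the classical Breit-Wigner estimate (\ref{Breit-Wigner series}), so the entire content lies in the contrapositive: if $V$ is super-exponentially decreasing with unbounded support, then $B(V) = +\infty$.

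My approach is a truncation argument. Set $V_n = V \mathbf{1}_{[-n,n]}$; each $V_n$ is compactly supported, $V_n \to V$ in a topology under which the scattering matrix depends continuously, and $|\ch\supp V_n| \to \infty$. Apply the Breit-Wigner formula (\ref{Breit-Wigner formula}) to each $V_n$ at a fixed $\lambda_0 \in \RR$ not a resonance of $V$, and solve for the sum:
$$\sum_{\lambda \in \Res V_n \setminus 0} \frac{|\Im \lambda|}{|\lambda - \lambda_0|^2} = 2|\ch\supp V_n| - i \tr S'_{V_n}(\lambda_0)S_{V_n}(\lambda_0)^* + E_n,$$
where $E_n$ is bounded as $n \to \infty$ (it collects the contribution of finitely many eigenvalues with $\Im \lambda > 0$). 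Since $-iS'S^*$ is the Eisenbud-Wigner time-delay operator and depends continuously on $V$ in any reasonable topology, the middle term stays bounded, while the first diverges. Hence the left-hand side tends to $+\infty$. Since only finitely many resonances lie in any disk $\{|\lambda| \le R\}$, excluding these makes $|\lambda - \lambda_0|^2$ comparable to $|\lambda|^2$, so equivalently $B(V_n) \to +\infty$.

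The crux is to transfer this divergence from $B(V_n)$ to $B(V)$. Using the Fredholm determinant description $\lambda \in \Res V \iff \det(1 + \sqrt{V}R_0(\lambda)\sqrt{|V|}) = 0$, and its analogue for $V_n$, one has $V_n \to V$ in the relevant trace-class sense; Rouch\'e's theorem then yields convergence of resonances of $V_n$ to those of $V$ on any compact subset of $\CC$. By Fatou's lemma, whatever portion of the divergence of $B(V_n)$ is supported on resonances that remain in bounded regions will pass over to $B(V)$. The bad scenario is that the divergence of $B(V_n)$ is instead carried off by resonances fleeing deep into the lower half plane as $n \to \infty$.

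\emph{Main obstacle.} Ruling out this escape-of-resonances scenario is the heart of the conjecture and requires global information on the distribution of $\Res V$ and $\Res V_n$, not merely local Rouch\'e-type continuity. Froese's conjecture provides exactly such global information for analytic $V$, pinning down the asymptotic location and density of resonances; I therefore expect the proof to proceed conditionally under Froese's conjecture, perhaps combined with explicit resonance-counting bounds for truncations, and I expect the fully unconditional statement to require genuinely new input on resonance stability under truncation beyond what current techniques offer.
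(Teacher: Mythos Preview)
The statement is a \emph{conjecture}, and the paper does not prove it. What the paper proves is the weaker Theorem~\ref{divergence of breit wigner, preliminary version}, which assumes both the hypotheses \emph{and the conclusion} of Froese's conjecture, plus one of four extra conditions. So there is no ``paper's own proof'' of Conjecture~\ref{strong conjecture} to compare against; your proposal should be read as an attempt at the open problem.

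Your truncation argument is precisely the heuristic the paper uses to \emph{motivate} the conjecture (see the paragraph before Conjecture~\ref{strong conjecture}), and the paper explicitly flags why it fails as a proof: ``resonances may escape to infinity or otherwise be badly behaved globally. Therefore we cannot conclude that we can take the limit of the Breit-Wigner formula as the support becomes unbounded.'' You have correctly identified this same obstruction as your ``main obstacle,'' but you have not overcome it. The Fatou-type step (``whatever portion of the divergence of $B(V_n)$ is supported on resonances that remain in bounded regions will pass over to $B(V)$'') does not actually buy you anything: local Rouch\'e convergence of $\Res V_n$ to $\Res V$ on compacta says nothing about where the \emph{mass} of $B(V_n)$ sits, and there is no a priori reason the diverging contribution cannot be carried entirely by resonances with $|\lambda|\to\infty$ as $n\to\infty$. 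So the proposal is a correct identification of the difficulty, not a proof.

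For contrast, the paper's partial result (Theorem~\ref{divergence of breit wigner, preliminary version}) uses no truncation at all. It works directly with $\Res V$: Froese's conjecture compares $n(r,\theta,\varphi)$ to the zero-counting function $N(r,\theta,\varphi)$ of the Froese function $F$, and Levin's theory of entire functions of completely regular growth (Theorem~\ref{zeroes of entire functions}, Lemmas~\ref{zeroes of entire functions with jump discontinuity} and~\ref{no trigonometric indicators}) then gives $N(r,\theta,\varphi)\sim r^\rho$ in some sector bounded away from $\RR$. Once one has $n(r,\theta,\varphi)\gtrsim r$ with $\pi<\theta\le\varphi<2\pi$, the elementary summation-by-parts Lemma~\ref{divergence of angular series} forces $B(V)=\infty$. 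The extra hypotheses in Theorem~\ref{divergence of breit wigner, preliminary version} are there exactly to guarantee such a sector exists---i.e., to rule out the possibility that a positive-density set of resonances accumulates on $\RR$, which is the same ``escape'' phenomenon you encountered, seen from a different angle.
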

The conjecture can be verified in some cases where resonances can be defined, yet the potential is not super-exponentially decreasing.
An example is the P\"oschl-Teller well,
$$V(x) = -\frac{2}{\cosh^2(x)}.$$
Its resonances are given by $-i(n+2)$, $n \in \NN$ \cite{cevik_2016}, and so $B(V)$ diverges, yet $V$ is not super-exponentially decreasing.

The distribution of $\Res V$ is in general quite difficult to study.
However, Froese made a conjecture \cite[Conjecture 1.2]{froese1997asymptotic} about the growth of the counting function of $\Res V$, and proved that a large class of potentials, including Gaussians, satisfy his conjecture \cite[Theorem 1.3]{froese1997asymptotic}.

To state Froese's conjecture, we assume that $V$ is super-exponentially decreasing, so that its Fourier-Laplace transform $\widehat V$ is entire, and introduce the following new entire function.
\begin{definition}
Given a super-exponentially decreasing potential $V$, its \dfn{Froese function} $F$ is given by
\begin{equation}
\label{froese function}
F(z) = \widehat V(2z) \widehat V(-2z) + 1.
\end{equation}
\end{definition}
We also recall the following classical definitions \cite[p. 52, p. 139]{levin1964distribution}.
\begin{definition}
Let $f$ be an entire function of order $\rho$ and normal type (that is, nonzero finite type). The \dfn{indicator function} $h$ of $f$ is given by
\begin{equation}
\label{h definition}
h(\theta) = \limsup_{r \to \infty} \frac{\log|f(re^{i\theta})|}{r^\rho}.
\end{equation}
\end{definition}
\begin{definition}
\label{completely regular growth}
Let $f$ be an entire function of order $\rho$ and normal type. If there is a subset $I$ of $\{r:r>0\}$ of density one such that for every $\theta$, the $\limsup$ appearing in (\ref{h definition}) is actually a uniform limit as $r \to \infty$ along $I$, then $f$ is said to have \dfn{completely regular growth}.
\end{definition}
Henceforth we let $A(R, \theta, \varphi)$ denote the sector
$$A(R, \theta, \varphi) = \{re^{i\alpha} \in \CC: r \leq R \text{ and } \alpha \in [\theta, \varphi]\}.$$
We let $n(R, \theta, \varphi)$ denote the number of resonances in $A(R, \theta, \varphi)$ and let $N(R, \theta, \varphi)$ denote the number of zeroes of the Froese function $F$ in $A(R, \theta, \varphi)$. We let $n(R) = n(R, 0, 2\pi)$ and similarly for $N(R)$.
With this background in place, we may recall Froese's conjecture.
\begin{conjecture}[Froese]
Suppose that $V$ is super-exponentially decreasing and $\widehat V$ has completely regular growth. Then in the lower-half plane $\CC_-$, one has
\begin{equation}
\label{froese estimate}
|n(R, \theta, \varphi) - N(R, \theta, \varphi)| = o(R^\rho).
\end{equation}
\end{conjecture}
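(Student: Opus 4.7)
The plan is to realize resonances as zeros of a natural entire function and then use the theory of entire functions of completely regular growth to transfer a pointwise Born-approximation estimate into a counting estimate. Concretely, since $V$ is super-exponentially decreasing, the Birman-Schwinger determinant $D(z) = \det(1 + \sqrt{V} R_0(z) \sqrt{|V|})$ mentioned in the excerpt is entire, and its zeros in $\CC_-$ coincide with $\Res V$ with multiplicity. Expanding $D$ as a Fredholm series using the explicit 1D free resolvent kernel $R_0(z;x,y) = (2iz)^{-1} e^{iz|x-y|}$ converts every term into an oscillatory integral in $V$; the heuristic is that in $\CC_-$, where $|e^{iz(x-y)}|$ grows, the dominant non-trivial contribution in the expansion is precisely $\widehat V(2z)\widehat V(-2z)$, so that $D(z)$ agrees with the Froese function $F(z)$ modulo entire corrections of strictly smaller indicator.

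The first main step is therefore a quantitative pointwise estimate of the form $\log |D(z) - F(z)| \leq h_F(\theta) r^\rho - \varepsilon r^\rho$ for $z = re^{i\theta} \in \CC_-$ lying on a density-one subset of radii, where $h_F$ is the indicator function of $F$ and $\varepsilon > 0$ is extracted from the sub-leading growth of the Fredholm series. Here the hypothesis that $\widehat V$ has completely regular growth enters in an essential way: it upgrades rough $O$-estimates on $\widehat V$ into the sharp indicator asymptotics $\log |\widehat V(re^{i\theta})| \sim h_{\widehat V}(\theta) r^\rho$ along a density-one set of radii, which is exactly what is needed to compare $D$ and $F$ on the logarithmic scale. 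Closure under product and translation then confers completely regular growth on $F$ as well.

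The second step is to invoke the Levin-Pfluger theorem, which states that an entire function of completely regular growth has a zero-counting function whose leading-order density in any sector is determined by the jumps of the derivative of its indicator. Since the pointwise comparison forces $D$ and $F$ to share the same indicator function in $\CC_-$, and since both have completely regular growth, Levin-Pfluger implies that their zero-counting functions in any sector of $\CC_-$ agree to within $o(R^\rho)$, which is exactly the Froese estimate (\ref{froese estimate}). \emph{The main obstacle} is proving uniformity of the Born-type comparison across all sectors of $\CC_-$, especially those approaching the real axis, where $R_0$ exhibits its characteristic branch behavior and the Fredholm terms with $k \geq 3$ can fail to be uniformly subleading without exploiting 1D-specific cancellations between incoming and outgoing components. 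This is precisely the analytic difficulty that Froese overcame in the Gaussian case by a direct saddle-point computation, and extending such control to all super-exponentially decreasing potentials with completely regular growth is the core obstruction to proving the conjecture in full generality.
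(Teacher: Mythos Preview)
The statement you were asked to prove is a \emph{conjecture}: the paper states it as Froese's conjecture and does not prove it. It is used only as a standing hypothesis in Proposition~\ref{linear lower bound} and Theorem~\ref{divergence of breit wigner, preliminary version}. There is therefore no proof in the paper to compare your proposal against.

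As a strategy sketch, what you wrote is essentially the standard heuristic behind Froese's own partial results: identify resonances with zeros of the Fredholm determinant $D$, argue that in $\CC_-$ the dominant term of the Fredholm expansion is $\widehat V(2z)\widehat V(-2z)$ so that $D$ and $F$ share the same indicator, and then invoke the Levin--Pfluger theory to conclude that their zero counts agree to $o(R^\rho)$. You are right that the crux is the uniform subleading bound on the tail of the Fredholm series across all sectors of $\CC_-$; you do not supply such a bound, and indeed no one has for general super-exponentially decreasing $V$ with $\widehat V$ of completely regular growth. So your proposal is not a proof but a correct diagnosis of where the difficulty lies, which you yourself acknowledge in the final paragraph. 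In short: there is no gap to point to beyond the one you already named, and there is no paper proof to compare with because the statement remains open.
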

In view of Froese's conjecture, we formulate a weaker form of Conjecture \ref{strong conjecture} as follows:
\begin{conjecture}
\label{weak conjecture}
Suppose that $V$ meets the hypotheses of Froese's conjecture and $V$ is not compactly supported. Then either $B(V)$ diverges, or $V$ is a counterexample to Froese's conjecture.
\end{conjecture}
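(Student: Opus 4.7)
The plan is to transfer the Breit--Wigner sum from resonances to zeros of the Froese function $F$ using Froese's hypothesis, then establish divergence of the transferred sum using Levin--Pfluger theory of entire functions of completely regular growth.

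Since $\widehat V$ is of normal type of order $\rho$ and $V$ is not compactly supported, Paley--Wiener forces $\rho > 1$: order $1$ together with normal type would make $\widehat V$ of finite exponential type, so $V$ would be compactly supported. The Froese function $F$ then has order $\rho > 1$ and completely regular growth, with indicator
\begin{equation*}
h_F(\theta) = \max\{2^\rho(h_V(\theta) + h_V(-\theta)),\, 0\},
\end{equation*}
using the symmetry $|\widehat V(-\bar z)| = |\widehat V(z)|$ valid for real $V$; the ``$\max$'' with zero arises because $F = \widehat V(2z)\widehat V(-2z) + 1$ is bounded below by $1$ in absolute value wherever the product decays. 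Thus $h_F$ is even in $\theta$, vanishes on the real axis, and is strictly positive on an arc about $\theta = \pm\pi/2$, since $\widehat V$ must grow along at least one direction for nontrivial $V$.

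By the Levin--Pfluger theorem \cite[\S III.3]{levin1964distribution}, the zeros of $F$ have an asymptotic angular distribution: there is a nonnegative Borel measure $\mu$ on $[0, 2\pi)$ such that $N(R, \theta, \varphi) = R^\rho \mu([\theta, \varphi]) + o(R^\rho)$ at continuity points, with smooth density $(h_F''(\theta) + \rho^2 h_F(\theta))/(2\pi\rho)$ where $h_F$ is twice differentiable, plus atoms at jumps of $h_F'$. In the Gaussian case $h_F(\theta) = (-2\cos 2\theta)_+$ the smooth density vanishes and the atoms sit at $\pm\pi/4,\ \pm 3\pi/4$; in general $\mu$ should have strictly positive mass on some closed subinterval $[\theta_1, \theta_2] \subset (-\pi, 0)$, coming either from the smooth density or from atoms at the boundary of the positivity arc of $h_F$. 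A polar-coordinate computation using $dN(r, \theta) \sim \rho r^{\rho-1} dr\, d\mu(\theta)$ then gives
\begin{equation*}
\int_{|\lambda| \leq R,\; \arg\lambda \in [\theta_1, \theta_2]} \frac{-\Im \lambda}{|\lambda|^2} \, dN(\lambda) \gtrsim R^{\rho-1},
\end{equation*}
since $|\sin\theta|$ is bounded below on $[\theta_1, \theta_2]$.

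Finally, Froese's estimate $|n - N| = o(R^\rho)$ on the sector $[\theta_1, \theta_2]$ lets me replace $dN$ by $dn$ via summation by parts in the radial variable against the smooth weight $-\sin\theta / r$, with error $o(R^{\rho-1})$ dominated by the main term; hence $B(V) \gtrsim R^{\rho-1} \to \infty$. The main obstacle is verifying that $\mu$ has such a subinterval of positive mass bounded away from the real axis for \emph{every} non-compactly supported $V$ satisfying Froese's hypotheses: exotic shapes of $h_F$ would have to be ruled out, for instance $h_F$ satisfying $h_F'' + \rho^2 h_F = 0$ identically on its positivity set with that set touching the real axis, so that all of $\mu$'s mass in $\CC_-$ concentrates near $\theta = 0$ or $\theta = -\pi$. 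A secondary concern is a uniform-in-angle form of Froese's estimate, needed to justify the final summation-by-parts step near the boundary $\theta = \theta_1, \theta_2$ of the chosen wedge.
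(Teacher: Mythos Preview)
This statement is labeled a \emph{conjecture} in the paper; the paper does not prove it. What the paper does prove is the partial result Theorem~\ref{divergence of breit wigner, preliminary version}, which establishes divergence of $B(V)$ under any one of four supplementary hypotheses, using exactly the machinery you outline: the Levin--Pfluger angular distribution of zeroes of $F$ (the paper's Theorem~\ref{zeroes of entire functions}), the transfer $|n-N|=o(R^\rho)$ from Froese's conjecture, and a summation-by-parts estimate on a fixed sector bounded away from $\RR$ (the paper's Lemma~\ref{divergence of angular series}). Your ``main obstacle''---showing that the angular measure $\mu$ has positive mass on some closed arc of $(\pi,2\pi)$---is precisely the gap between that theorem and the full conjecture. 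The paper's Case~3 is your desired condition ``$\mu$ has mass on an arc bounded away from $\{0,\pi\}$'', and its Case~4 is the complementary condition ``$h_F$ is differentiable at $0$ or $\pi$, so no atom of $\mu$ sits there''. Your proposal therefore recovers the paper's partial result but, as written, does not go beyond it; you have correctly located the missing step rather than supplied it.

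It is worth noting that your own observation $h_F(0)=h_F(\pi)=0$ (which follows from $|\widehat V|\leq\|V\|_{L^1}$ on $\RR$, and which the paper does not exploit) together with $\rho>1$ looks sufficient to close the gap: if all of $\mu$ were concentrated at $\{0,\pi\}$, then on $(0,\pi)$ one would have $h_F''+\rho^2 h_F=0$ with boundary values $h_F(0)=h_F(\pi)=0$ and $h_F\geq 0$; a nonzero solution $C\sin(\rho\theta+\phi)$ with $\rho>1$ must change sign in $(0,\pi)$, forcing $h_F\equiv 0$ there and, by symmetry, everywhere, contradicting normal type. You have the ingredients for this argument but stop short of assembling them. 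Your secondary concern about angular uniformity in Froese's estimate is a non-issue in the paper's formulation: one fixes a single sector and uses only the scalar bound $n(r,\theta_1,\theta_2)\gtrsim r$, with no integration in the angular variable.
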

Froese's conjecture gives a linear lower bound on the resonance-counting function $n$ (Proposition \ref{linear lower bound}), so either all resonances except for a zero-density set are contained in arbitrarily small sectors around $\RR$, or $B(V)$ diverges (Lemma \ref{divergence of angular series}).
So, if $B(V)$ converges and Froese's conjecture holds, then a positive-density set of resonances is contained in arbitrarily small sectors around $\RR$, a result that was already proven for compactly supported potentials by Zworski \cite{zworski1987distribution}.
The method of complex scaling rules this possibility out if $V$ is holomorphic in a conic neighborhood of $\RR$ \cite[Corollary 12.14]{sjostrand2002lectures}. We show that certain unnatural hypotheses on the monotone, nonnegative function
\begin{equation}
\label{s formula}
s(\theta, \varphi) = h'(\varphi) - h'(\theta) + \rho^2 \int_\theta^\varphi h(\alpha)~d\alpha,
\end{equation}
where $h$ is the indicator function of $F$, will also rule out this possibility (Theorem \ref{divergence of breit wigner, preliminary version}).
\begin{theorem}
\label{divergence of breit wigner, preliminary version}
Suppose that $V$ meets the hypotheses and conclusion of Froese's conjecture. If $V$ is noncompactly supported, then the Breit-Wigner series $B(V)$ will diverge provided that any one of the following criteria are true:
\begin{enumerate}
\item The set of resonances of $V$ contained in arbitrarily small sectors around $\RR$ is of zero density. \label{resonances in sectors}
\item $V$ is holomorphic in a conic neighborhood of $\RR$. \label{holomorphic potential}
\item There are $\theta \leq \varphi$ such that $0,\pi \notin [\theta, \varphi]$ and $s(\theta, \varphi) \neq 0$. \label{s is not defined}
\item There is a $k \in \{0, 1\}$ and a $\theta > k\pi$ such that $s(k\pi, \theta)$ exists. \label{limit of s}
\end{enumerate}
\end{theorem}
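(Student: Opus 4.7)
The strategy rests on the dichotomy set up immediately before the theorem: combining Proposition \ref{linear lower bound} with Lemma \ref{divergence of angular series}, the hypotheses and conclusion of Froese's conjecture imply that either $B(V)$ diverges, or a positive-density subset of $\Res V$ is trapped in arbitrarily small sectors around $\RR$. Call the latter the \emph{concentration alternative}. It suffices to rule the concentration alternative out under each of the four criteria, and the same will then force $B(V) = \infty$.

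Criterion (\ref{resonances in sectors}) is literally the denial of the concentration alternative. For criterion (\ref{holomorphic potential}), complex scaling applied to a $V$ holomorphic in a conic neighborhood of $\RR$ yields a deformed operator whose discrete spectrum in a fixed sector around $\RR$ coincides with the resonances there; Corollary 12.14 of \cite{sjostrand2002lectures} then directly prohibits any positive-density accumulation in sufficiently narrow sectors around $\RR$.

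Criteria (\ref{s is not defined}) and (\ref{limit of s}) use Levin's theorem that for an entire function $f$ of order $\rho$ and completely regular growth whose indicator $h$ is differentiable at the endpoints of $[\theta,\varphi]$,
\begin{equation}
\label{levin pf eq}
N_f(R,\theta,\varphi) = \frac{R^\rho}{2\pi\rho}\, s(\theta,\varphi) + o(R^\rho).
\end{equation}
Applied to $f = F$ and combined with \eqref{froese estimate}, this transfers zero-density information for $F$ into resonance-density information for $V$. Under (\ref{s is not defined}), the hypothesis $s(\theta,\varphi) \neq 0$ on a sector avoiding $0, \pi$ immediately provides a positive-density set of resonances bounded away from $\RR$, contradicting concentration. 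For (\ref{limit of s}), assume the concentration alternative. Then Froese's conjecture plus \eqref{levin pf eq} forces $s(\theta,\varphi) = 0$ on every $[\theta,\varphi]$ contained in $(0,\pi)$ or $(\pi,2\pi)$; differentiating in $\varphi$ gives $h'' + \rho^2 h = 0$ on each open half-circle, so $h$ is a rigid trigonometric function there. The symmetries $F(\bar z) = \overline{F(z)}$ and $F(-z) = F(z)$ valid for real one-dimensional $V$ force $h$ to be even, so $h'(k\pi+) = -h'(k\pi-)$; the assumed existence of $h'(k\pi)$ then eliminates the sine component and pins $h(\alpha) = A\cos(\rho\alpha)$ on the half-circle through $\theta$. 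A direct substitution into \eqref{s formula} gives $s(k\pi,\theta) \equiv 0$, so \eqref{levin pf eq} forces the zero density of $F$, and hence by Froese the density of resonances, to vanish in $[k\pi,\theta]$; symmetry propagates this to all four sectors around $\RR$, and the total resonance density vanishes in contradiction with the linear lower bound of Proposition \ref{linear lower bound}.

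I expect the subtlest point to be criterion (\ref{limit of s}): one must leverage the existence of the endpoint derivative $h'(k\pi)$, the two symmetries of $F$, and the ODE for $h$ with just enough precision to eliminate the sine component on the relevant half-circle. Once this is done the substitution into \eqref{s formula} is a short computation, but getting the one-sided-derivative bookkeeping correct — and reconciling the formal existence of $s(k\pi,\theta)$ with the concentration of zeros just off the ray $\arg z = k\pi$ — is where the real care will be required.
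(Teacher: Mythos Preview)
Your treatment of Cases \ref{resonances in sectors}, \ref{holomorphic potential}, and \ref{s is not defined} matches the paper's: the first is the dichotomy itself, the second is reduced to the first via \cite[Corollary 12.14]{sjostrand2002lectures}, and the third feeds directly into Lemma \ref{divergence of angular series} through Levin's density formula (your \eqref{levin pf eq}, the paper's Theorem \ref{zeroes of entire functions} and Lemma \ref{zeroes of entire functions with jump discontinuity}).

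Your handling of Case \ref{limit of s}, however, is both more laborious than the paper's and not quite closed. The paper observes that the mere \emph{existence} of $s(k\pi,\theta)$ forces $h'(k\pi)$ to exist, i.e.\ $k\pi\notin Z$; Lemma \ref{characterizing jump discontinuity} then gives continuity of $s(\beta,\cdot)$ at $k\pi$, so $\lim_{\varepsilon\to 0} s(k\pi-\varepsilon,k\pi+\varepsilon)=0$, and Theorem \ref{zeroes of entire functions} yields zero $F$-zero density in arbitrarily small sectors about the ray $\arg z=k\pi$. The symmetry $F(-z)=F(z)$ transports this to the ray $\arg z=(1-k)\pi$, and Froese's estimate converts it into Case \ref{resonances in sectors}. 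No ODE, no trigonometric form of $h$, no reality assumption on $V$.

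By contrast, your route assumes the concentration alternative, solves $h''+\rho^2 h=0$ on the open half-circles, and then invokes the symmetry $F(\bar z)=\overline{F(z)}$ to make $h$ even. That symmetry requires $V$ to be real-valued, which is not among the theorem's hypotheses. More seriously, your concluding step does not yet yield a contradiction: having pinned $h(\alpha)=A\cos(\rho\alpha)$ and computed $s(k\pi,\theta)=0$, you deduce zero density in $[k\pi,\theta]$, but this sector lies in the \emph{interior} of the half-circle --- it is precisely what the concentration alternative already grants, not a contradiction with Proposition \ref{linear lower bound}. To finish along your lines you must also exclude point masses at $0$ and $\pi$ (equivalently, show $(1-k)\pi\notin Z$), or else argue via continuity of $h$ at $\pi$ together with $h\ge 0$ (Lemma \ref{no trigonometric indicators}) and $\rho>1$ that $A=0$, contradicting normal type. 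Either completion is possible, but the paper's one-line reduction through $k\pi\notin Z$ avoids all of this.
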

Here $s$ is given by (\ref{s formula}), and Case \ref{s is not defined} includes the possibility that $s(\theta, \varphi)$ does not exist.
We prove Theorem \ref{divergence of breit wigner, preliminary version} in Section \ref{divergence section}.

In Section \ref{linear growth}, we recall properties of the Froese function $F$ and prove the following proposition, which will be used in Section \ref{divergence section} and may be of independent interest:
\begin{proposition}
\label{linear lower bound}
Suppose that $V$ meets the hypotheses and conclusion of Froese's conjecture. Let $\rho$ denote the order of $\widehat V$. If $V$ is not identically zero, then as $r \to \infty$, $n(r) \gtrsim r^\rho \geq r$.
\end{proposition}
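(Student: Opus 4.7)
The plan is to apply Froese's conjecture to reduce the desired bound on $n(r)$ to a lower bound on the zero-counting function $N(r)$ of the Froese function $F$, which I will then prove via Jensen's formula. Since $V$ is real, $F(x) = |\widehat V(2x)|^2 + 1 \geq 1$ on $\RR$, so $F$ has no real zeros, and since $F(-z) = F(z)$, the zero set is symmetric under $z \mapsto -z$, giving $N(r, \pi, 2\pi) = N(r)/2$. Because resonances of a real potential lie in the closed lower half plane, Froese's conjecture applied to the sector $(\pi, 2\pi)$ yields $n(r) \geq n(r, \pi, 2\pi) = N(r)/2 + o(r^\rho)$, reducing the proposition to $N(r) \gtrsim r^\rho$.

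Write $F = G + 1$ with $G(z) = \widehat V(2z)\widehat V(-2z)$. Then $G$ is the product of two entire functions of completely regular growth of order $\rho$, hence itself CRG of order $\rho$, and $F$ inherits completely regular growth with indicator $h_F = \max(h_G, 0)$: on rays where $h_G(\theta) > 0$, $|G|$ dominates, so $h_F = h_G$; where $h_G(\theta) \leq 0$, completely regular growth makes $|G(re^{i\theta})| \to 0$ along a density-one set, so $|F| \to 1$ and $h_F = 0$. Because $F(0) = \widehat V(0)^2 + 1 \neq 0$, Jensen's formula combined with this indicator yields
\[
\lim_{r\to\infty} \frac{N(r)}{r^\rho} = \frac{\rho}{2\pi} \int_0^{2\pi} h_F(\theta)\,d\theta.
\]
For this limit to be positive, I need $h_G > 0$ on an open set, i.e., $G$ has positive type; this follows by direct estimation of $|G(iy)| = |\widehat V(2iy)\widehat V(-2iy)|$, which grows at rate $e^{c|y|^\rho}$ in some direction because $V \not\equiv 0$ is super-exponentially decreasing.

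Lastly, $\rho \geq 1$ because $\widehat V$ is bounded on $\RR$ (as $V \in L^1(\RR)$); if its order were less than $1$, Phragm\'en--Lindel\"of in each half plane would force $\widehat V$ bounded on all of $\CC$, hence constant, hence zero by Riemann--Lebesgue, contradicting $V \not\equiv 0$.

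The main subtlety is the identification $h_F = \max(h_G, 0)$: the ``cutoff'' effect of the $+1$ in the definition of $F$ is essential, since $h_G$ itself may have mean zero---e.g.\ for a Gaussian $V$, where $\widehat V$ is zero-free---yet $h_F$ need not, reflecting the nontrivial zero distribution of the Froese function.
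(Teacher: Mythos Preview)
Your route differs from the paper's and is in one respect cleaner. Both arguments reduce, via Froese's conjecture and the symmetry $F(-z)=F(z)$, to showing $N(r)\gtrsim r^\rho$; both establish $h_F=(h_G)_+\geq 0$ (this is the paper's Lemma~\ref{no trigonometric indicators}); and both ultimately need $h_F\not\equiv 0$, i.e.\ that $F$ has normal type. From there you invoke the Jensen/Levin relation $\lim N(r)/r^\rho=\frac{\rho}{2\pi}\int_0^{2\pi}h_F>0$ directly. The paper instead argues by contradiction that if the angular density $s(0,\cdot)$ vanished identically then $h$ would satisfy $h''+\rho^2h=0$ and hence be a nontrivial sinusoid, contradicting $h\geq 0$; your argument bypasses this ODE step entirely, since $h\geq 0$ and $h\not\equiv 0$ already force $s(0,2\pi)=\rho^2\int h>0$.

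There are, however, two genuine gaps. First, you assume $V$ is real in order to obtain $F(x)=|\widehat V(2x)|^2+1\geq 1$ on $\RR$ (hence no real zeros of $F$, hence $N(r,\pi,2\pi)=N(r)/2$); the proposition carries no such hypothesis. Second, and more substantively, your justification that $G$ has positive type by ``direct estimation of $|G(iy)|$'' does not work as stated: along the imaginary axis the two factors $\widehat V(2iy)$ and $\widehat V(-2iy)$ are governed by the behaviour of $V$ near $+\infty$ and near $-\infty$ respectively, so if $V$ is one-sided (say supported in $[0,\infty)$) one factor grows while the other decays, and it is not at all obvious that the product grows at rate $e^{c|y|^\rho}$. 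The paper does not prove this point either---it simply lists ``$F$ has completely regular growth'' (hence normal type, hence $h\not\equiv 0$) among the properties inherited from the assumption that $\widehat V$ is CRG---so if you are willing to grant that assertion your proof is complete; but the sketched imaginary-axis estimate is not itself a proof of it.
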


\begin{notate}
We will write $f \lesssim g$ to mean that there is a constant $C > 0$ such that for every $x$ such that $|x|$ is large enough, $f(x) \leq Cg(x)$.
We write $f \sim g$ to mean $g \lesssim f \lesssim g$, and use a subscript $\lesssim_y$ to mean that $C$ is allowed to depend on $y$.

Given a fixed set $I \subseteq \{r: r > 0\}$ of density one, which will always be the set $I$ that appears in Definition \ref{completely regular growth}, we write $f \approx g$ to mean that $f(r)/g(r) \to 1$ as $r \to \infty$ along $I$ (and uniformly in all other variables). We write $f \approx 0$ to mean that $f \to 0$, the limit taken along $I$.

We write $f'(x\pm 0)$ to mean the left ($-$) and right ($+$) derivatives of a semidifferentiable function $f$. We write $x_+$ to mean $\max(x, 0)$.
\end{notate}

\begin{ack}
I would like to thank Maciej Zworski for introducing me to scattering theory and for many helpful discussions; in particular, he suggested that Froese's conjecture could be used to prove certain cases of the main conjecture.

I also gratefully acknowledge partial support by the National Science Foundation under the grant DMS-1500852.
\end{ack}

\section{Linear growth of resonances}
\label{linear growth}
The following properties of the Froese function $F$ follow from its definition (\ref{froese function}) and the assumption that $\widehat V$ is an entire function of completely regular growth:
\begin{enumerate}
\item $F$ has completely regular growth.
\item The order of $F$ is $\rho$.
\item For every $z \in \CC$, $F(z) = F(-z)$.
\end{enumerate}
Let $h$ be the indicator function of $F$, and let $s$ be given by (\ref{s formula}).
We recall a characterization of $s$ \cite[Theorem III.3]{levin1964distribution}.
\begin{theorem}
\label{zeroes of entire functions}
If $\widehat V$ is an entire function of completely regular growth, then there is a countable, possibly empty, exceptional set $Z$ of angles such that:
\begin{enumerate}
\item If $\theta, \varphi \in [0, 2\pi] \setminus Z$, then
$$s(\theta, \varphi) = 2\pi\rho \lim_{r\to\infty} \frac{N(r, \theta, \varphi)}{r^\rho}.$$
\item $\theta \in Z$ if and only if $h'(\theta - 0) \neq h'(\theta + 0)$.
\end{enumerate}
\end{theorem}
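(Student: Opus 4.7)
The plan is to derive the theorem as a direct consequence of Levin's Theorem III.3 applied to the Froese function $F$, which has completely regular growth of order $\rho$ by the properties listed above. The core of Levin's argument is the argument principle: for $R$ chosen so that no zeros of $F$ lie on the boundary rays,
$$N(R, \theta, \varphi) = \frac{1}{2\pi i} \oint \frac{F'(z)}{F(z)}\, dz,$$
the contour being the positively oriented boundary of $A(R, \theta, \varphi)$.

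I would split the contour integral into three pieces: two radial segments and one circular arc. After dividing by $R^\rho$, the radial integrals contribute boundary terms whose limits are proportional to $h'(\varphi)$ and $-h'(\theta)$, obtained by identifying the real part of the integrand along the ray $\arg z = \alpha$ with $(d/dr)\log|F(re^{i\alpha})|$ and then invoking $r^{-\rho}\log|F(re^{i\alpha})| \to h(\alpha)$. The arc integral, after rescaling and a trigonometric integration by parts in $\alpha$, yields $\rho^2 \int_\theta^\varphi h(\alpha)\, d\alpha$. Crucially, completely regular growth supplies the set $I$ of density one along which $R^{-\rho}\log|F(Re^{i\alpha})|$ converges to $h(\alpha)$ uniformly in $\alpha$; this is what licenses the interchange of limits in $R$ with integration in $\alpha$. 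Summing the three contributions establishes part (1): for $\theta, \varphi$ outside an exceptional set of angles,
$$s(\theta, \varphi) = 2\pi\rho \lim_{R \to \infty,\, R \in I} \frac{N(R, \theta, \varphi)}{R^\rho}.$$

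For part (2), the indicator $h$ of any entire function of order $\rho$ and normal type satisfies a trigonometric convexity condition, which forces $h$ to be differentiable outside an at-most-countable subset of $[0, 2\pi]$; the exceptional angles are exactly those where $h'(\theta - 0) \neq h'(\theta + 0)$. At such an angle, Levin's formula combined with the jump in $h'$ gives a strictly positive angular density of zeros on the ray $\arg z = \theta$, which is exactly the obstruction to a clean density limit when that ray is placed on the boundary of the sector. This characterizes $Z$ and shows in particular that $Z$ is countable.

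The main obstacle is the uniformity of convergence needed for the arc integral: if the circle $|z| = R$ passes near a zero of $F$, the logarithmic derivative $F'/F$ blows up there, so one must restrict $R$ to the density-one set $I$ and then show that the residual error contributed by near-zeros is $o(R^\rho)$. This is the step where the CRG hypothesis does the essential work in Levin's proof, and it is what separates the present theorem from the coarser Jensen-type estimate which would only yield $O(R^\rho)$ bounds on the counting function rather than an honest angular density.
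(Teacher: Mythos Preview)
The paper does not give a proof of this statement at all: it is simply quoted as \cite[Theorem III.3]{levin1964distribution} and then used as a black box. Your proposal correctly identifies this as Levin's result and, beyond what the paper does, sketches the underlying argument (argument principle on the sector, radial pieces producing the $h'$ boundary terms, arc piece producing the $\rho^2\int h$ term, with the CRG hypothesis supplying the density-one set along which the limits can be interchanged). That sketch is in the right spirit---it is essentially Carleman's formula for a sector combined with the CRG asymptotics---so there is nothing to correct; you have just gone further than the paper, which is content to cite Levin.

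One small remark: your displayed conclusion restricts the limit to $R\in I$, whereas the statement has an unrestricted limit. This is harmless because $N(r,\theta,\varphi)$ is nondecreasing in $r$ and $I$ has density one, so convergence along $I$ forces convergence along all of $(0,\infty)$; you may want to say that explicitly.
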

Note that $h$ is semidifferentiable, hence continuous.
Moreover, $s(\theta, \cdot)$ is increasing and nonnegative for any $\theta$, and dually, $s(\cdot, \varphi)$ is decreasing and nonnegative for any $\varphi$.
If $\theta, \varphi \notin Z$, then $s(\theta, \varphi)$ must exist.

We adopt the convention that if $\theta \notin Z$ then $s(\theta, \cdot)$ is right-continuous, viz.
$$s(\theta, \varphi) = h'(\varphi + 0) - h'(\theta) + \rho^2\int_\theta^\varphi h(\alpha)~d\alpha.$$
Thus $s(\theta, \cdot)$ is defined and right-continuous on all of $[0, 2\pi]$.

\begin{lemma}
\label{characterizing jump discontinuity}
If $\widehat V$ is an entire function of completely regular growth, $\beta < \theta$, and $\beta \notin Z$, then the following are equivalent:
\begin{enumerate}
\item $s(\beta, \cdot)$ has a jump discontinuity at $\theta$.
\item $s(\beta, \cdot)$ is not continuous at $\theta$.
\item $\theta \in Z$.
\end{enumerate}
\end{lemma}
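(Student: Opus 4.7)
The plan is to reduce all three statements to continuity properties of the one-sided derivatives of $h$. By our convention,
\[
s(\beta, \varphi) = h'(\varphi + 0) - h'(\beta) + \rho^2 \int_\beta^\varphi h(\alpha)\,d\alpha.
\]
Since $h$ is continuous (being semidifferentiable, as noted just after Theorem \ref{zeroes of entire functions}), the integral term $\varphi \mapsto \rho^2 \int_\beta^\varphi h(\alpha)\,d\alpha$ is continuous in $\varphi$, and $h'(\beta)$ is a constant. Thus the continuity/discontinuity behavior of $s(\beta,\cdot)$ at $\theta$ is inherited entirely from that of $\varphi \mapsto h'(\varphi + 0)$ at $\theta$.

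Next I would invoke the $\rho$-trigonometric convexity of the indicator function $h$ (a standard property of indicators of entire functions of finite order, see Levin \cite[Ch.~I, \S16]{levin1964distribution}). This convexity implies that $h$ admits left and right derivatives at every point, that the right derivative $\varphi \mapsto h'(\varphi + 0)$ is right-continuous, and that its left limit at any point $\theta$ equals $h'(\theta - 0)$. Consequently $h'(\cdot + 0)$ can fail to be continuous at $\theta$ only by having a jump, and this jump has magnitude $h'(\theta + 0) - h'(\theta - 0)$.

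Combining these observations gives the chain of equivalences. The implication (1)$\Rightarrow$(2) is trivial. For (2)$\Rightarrow$(3): if $s(\beta,\cdot)$ is not continuous at $\theta$, then by the first paragraph $h'(\cdot + 0)$ is not continuous at $\theta$, so by the second paragraph $h'(\theta - 0) \neq h'(\theta + 0)$, which is exactly the condition $\theta \in Z$ from Theorem \ref{zeroes of entire functions}. For (3)$\Rightarrow$(1): if $\theta \in Z$, then $h'(\theta - 0) \neq h'(\theta + 0)$, so the left limit of $s(\beta,\cdot)$ at $\theta$ exists and differs from $s(\beta, \theta)$ by exactly $h'(\theta + 0) - h'(\theta - 0) \neq 0$. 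Since the right limit equals $s(\beta, \theta)$ by right-continuity, this is a genuine jump discontinuity, establishing (1).

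The only real obstacle is justifying the structural properties of $h$ used in the second paragraph: the existence of one-sided derivatives everywhere, the right-continuity of $h'(\cdot + 0)$, and the identification of its left limit with $h'(\cdot - 0)$. These are well-known consequences of $\rho$-trigonometric convexity and can be cited directly from Levin; no new calculation is needed. Note also that the hypothesis $\beta \notin Z$ is used only implicitly, to guarantee that $h'(\beta)$ is well-defined as a single value so that the analysis of $s(\beta,\cdot)$ near $\theta$ is unambiguous.
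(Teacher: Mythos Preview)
Your argument is correct and lands on the same computational core as the paper's: the (dis)continuity of $s(\beta,\cdot)$ at $\theta$ is governed precisely by whether $h'(\theta+0)=h'(\theta-0)$, i.e.\ by membership in $Z$. The organizational difference is in how $(1)\Leftrightarrow(2)$ is obtained. The paper simply invokes the monotonicity of $s(\beta,\cdot)$ (recorded just before the lemma as a consequence of Theorem~\ref{zeroes of entire functions}), so that any discontinuity is automatically a jump; it then computes $s(\beta,\theta+\delta)-s(\beta,\theta-\delta)=s(\theta-\delta,\theta+\delta)$ and lets $\delta\to 0$. You instead isolate the term $h'(\varphi+0)$ and cite the $\rho$-trigonometric convexity of $h$ from Levin to conclude that $h'(\cdot+0)$ is right-continuous with left limit $h'(\cdot-0)$, which yields the jump characterization directly. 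Your route is a little more structural (it explains \emph{why} the only possible discontinuity is a jump of size $h'(\theta+0)-h'(\theta-0)$), while the paper's route is shorter given that the monotonicity of $s$ is already on record. Both are valid and the remaining steps coincide.
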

\begin{proof}
First observe that since $s(\beta, \cdot)$ is monotone, all discontinuities are jump discontinuities.

Suppose that $s(\beta, \cdot)$ is continuous at $\theta$.
Thus
$$\lim_{\delta \to 0} s(\theta - \delta, \theta + \delta) = \lim_{\delta \to 0} s(\beta, \theta + \delta) - s(\beta, \theta - \delta) = 0,$$
the limit taken along $\delta > 0$ such that $\theta - \delta \notin Z$. Yet
$$s(\theta - \delta, \theta + \delta) = h'(\theta - \delta) - h'(\theta + \delta) + \rho^2 \int_{\theta - \delta}^{\theta + \delta} h(\alpha)~d\alpha,$$
and taking the limit of both sides as $\delta \to 0$ we see that $h'(\theta - 0) = h'(\theta + 0)$, so $\theta \notin Z$.

Conversely, if $s(\beta, \cdot)$ has a jump discontinuity at $\theta$ then
$$0 < \lim_{\varepsilon \to 0} s(\beta, \theta) - s(\beta, \theta - \varepsilon) = h'(\theta + 0) - h'(\theta - 0)$$
so $\theta \in Z$.
\end{proof}

\begin{lemma}
\label{zeroes of entire functions with jump discontinuity}
If $\widehat V$ is an entire function of completely regular growth and either $s(\theta, \varphi)$ is nonzero or $\theta \in Z$, then
$$N(r, \theta, \varphi) \sim r^\rho.$$
\end{lemma}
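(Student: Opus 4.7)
My plan is to prove $N(r,\theta,\varphi)\sim r^\rho$ by establishing the upper and lower bounds separately, then splitting the lower bound according to the two clauses of the hypothesis. Recall from the start of Section \ref{linear growth} that $F$ has completely regular growth and order $\rho$.

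The upper bound is standard: since $F$ has order $\rho$ and normal type, Jensen's formula gives $N(r,0,2\pi)\lesssim r^\rho$, and monotonicity of the sector counting function in its endpoints yields $N(r,\theta,\varphi)\leq N(r,0,2\pi)\lesssim r^\rho$.

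For the lower bound, my strategy is to produce a sub-interval $[\theta',\varphi']\subseteq[\theta,\varphi]$ with $\theta',\varphi'\notin Z$ and $s(\theta',\varphi')>0$, so that Theorem \ref{zeroes of entire functions} gives $N(r,\theta,\varphi)\geq N(r,\theta',\varphi')\sim r^\rho$ directly. In the first case of the hypothesis, $s(\theta,\varphi)\neq 0$ forces $\theta\notin Z$ by the right-continuity convention. If $\varphi\notin Z$ as well, we take $\theta'=\theta$ and $\varphi'=\varphi$ and are done. Otherwise $\varphi\in Z$; the monotonicity of $s(\theta,\cdot)$ and the countability of $Z$ let us choose $\varphi'\notin Z$ just below $\varphi$ with $s(\theta,\varphi')$ arbitrarily close to $s(\theta,\varphi^-)=s(\theta,\varphi)-J_\varphi$, where $J_\varphi:=h'(\varphi+0)-h'(\varphi-0)>0$. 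This quantity is positive unless the entire mass $s(\theta,\varphi)$ is contributed by the jump at $\varphi$, in which case $\varphi\in Z$ reduces us to an analogous argument based on the second clause of the hypothesis, applied at the endpoint $\varphi$ in place of $\theta$.

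For the second clause, $\theta\in Z$, Lemma \ref{characterizing jump discontinuity} and its proof show that if $\alpha_n\nearrow\theta$ and $\beta_n\searrow\theta$ with $\alpha_n,\beta_n\notin Z$, then $s(\alpha_n,\beta_n)\to J_\theta:=h'(\theta+0)-h'(\theta-0)>0$. Theorem \ref{zeroes of entire functions} then gives $N(r,\alpha_n,\beta_n)\gtrsim r^\rho$ for large $n$. The main obstacle is that $[\alpha_n,\beta_n]$ straddles the ray $\arg z=\theta$ and is not contained in $[\theta,\varphi]$: a priori the positive mass captured by $N(r,\alpha_n,\beta_n)$ could sit entirely in the excluded half-sector $[\alpha_n,\theta]$. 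To transfer the bound to $[\theta,\beta_n]\subseteq[\theta,\varphi]$ I would exploit the symmetry $F(z)=F(-z)$, which makes the angular distribution of zeros $\pi$-periodic; equivalently, writing $F(z)=G(z^2)$ for an entire $G$ of order $\rho/2$ reduces the question to the zero distribution of $G$ near the ray at angle $2\theta$. Once a balanced share of the $J_\theta$ mass is shown to lie on the $\theta^+$ side of the ray, $N(r,\theta,\beta_n)\gtrsim r^\rho$ follows, and hence $N(r,\theta,\varphi)\geq N(r,\theta,\beta_n)\gtrsim r^\rho$, completing the proof.
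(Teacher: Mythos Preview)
Your upper bound and the easy sub-cases of the lower bound are fine, but the core case $\theta\in Z$ has a genuine gap. The symmetry $F(z)=F(-z)$ only gives $\pi$-periodicity of the angular zero distribution: it identifies the picture near $\arg z=\theta$ with the picture near $\arg z=\theta+\pi$, not a reflection about the ray $\arg z=\theta$. The reformulation $F(z)=G(z^2)$ likewise just relocates the problem to the ray $\arg w=2\theta$ without producing any left/right balance there. Nothing in the hypotheses prevents the entire atom of mass $J_\theta$ from sitting on the $(\theta-\varepsilon,\theta)$ side for every $\varepsilon>0$, so your transfer to $[\theta,\beta_n]$ is unjustified. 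The same gap infects your first-clause sub-case where all of $s(\theta,\varphi)$ is the jump at $\varphi$, since you reduce that to the second-clause argument.

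The paper avoids this by approximating from \emph{outside} rather than inside: for $\theta\in Z$ it picks $\beta<\theta$ with $\beta\notin Z$, observes that the jump of $s(\beta,\cdot)$ at $\theta$ has size $\eta>0$, and deduces $s(\theta-\varepsilon,\varphi)\ge\eta$ for all small $\varepsilon$ with $\theta-\varepsilon\notin Z$. Theorem~\ref{zeroes of entire functions} then gives $N(r,\theta-\varepsilon,\varphi)\ge(\eta/\rho)r^\rho$ for large $r$, uniformly in $\varepsilon$. Since $A(r,\theta,\varphi)=\bigcap_{\varepsilon>0}A(r,\theta-\varepsilon,\varphi)$ is closed and, for each fixed $r$, only finitely many zeros lie in the disk, one may let $\varepsilon\to 0$ to obtain $N(r,\theta,\varphi)\ge(\eta/\rho)r^\rho$. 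The analogous trick handles a discontinuity at $\varphi$ by enlarging to $\varphi'>\varphi$. The point is that enlarging the sector lets you capture the atom and then squeeze it back onto the closed boundary ray; shrinking the sector, as you attempt, can lose the atom entirely.
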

\begin{proof}
This follows immediately from Theorem \ref{zeroes of entire functions} if $\theta \notin Z$ and $s(\theta, \cdot)$ is continuous at $\varphi$.
So suppose otherwise.

If $\theta \in Z$, then let $\beta < \theta$, $\beta \notin Z$.
Then by Lemma \ref{characterizing jump discontinuity}, $s(\beta, \cdot)$ has a jump discontinuity at $\theta$, say by $\eta > 0$.
For every $\varepsilon > 0$ small enough,
$$s(\theta - \varepsilon, \varphi) = s(\beta, \varphi) - s(\beta, \theta - \varepsilon) \geq s(\beta, \theta) - s(\beta, \theta - \varepsilon) \geq \eta.$$
Thus
$$N(r, \theta - \varepsilon, \varphi) \geq \frac{\eta}{\rho}r^\rho$$
if $r$ is large enough, uniformly in $\varepsilon$. Since
$$A(r, \theta, \varphi) = \bigcap_{\varepsilon > 0} A(r, \theta - \varepsilon, \varphi)$$
is a closed sector, it follows that
$$N(r, \theta, \varphi) \geq \frac{\eta}{\rho}r^\rho$$
if $r$ is large enough. This proves the lemma in the case $\theta \in Z$.

Thus we may assume that $\theta \notin Z$ and $s(\theta, \cdot)$ is discontinuous at $\varphi$.
If this happens, choose $\varphi' > \varphi$ such that $\varphi' - \varphi$ is small and $\varphi' \notin Z$. Then
$$s(\theta, \varphi') \geq s(\theta, \varphi) > 0,$$
so by Theorem \ref{zeroes of entire functions}, $N(r, \theta, \varphi') \sim r^\rho$ uniformly in $\varphi'$, again proving the lemma.
\end{proof}

\begin{lemma}
\label{no trigonometric indicators}
If $\widehat V$ is an entire function of completely regular growth, then $h \geq 0$.
\end{lemma}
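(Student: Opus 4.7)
The plan is to write $F = 1 + g$ with $g(z) = \widehat V(2z)\widehat V(-2z)$, and analyze the indicator $h$ of $F$ through the indicator $h_g$ of $g$. As a product of two rescalings of $\widehat V$, the function $g$ also has completely regular growth, with indicator $h_g(\theta) = 2^\rho(h_V(\theta) + h_V(\theta+\pi))$, where $h_V$ is the indicator of $\widehat V$.

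First I would handle those $\theta$ with $h_g(\theta) \neq 0$ via the additive indicator calculus. If $h_g(\theta) > 0$, then along the density-one set $I$ from the CRG hypothesis, $|g(re^{i\theta})|$ grows exponentially, so $|F| = |1+g| \sim |g|$ gives $h(\theta) = h_g(\theta) > 0$. If $h_g(\theta) < 0$, then $|g(re^{i\theta})| \to 0$ exponentially along $I$, so $F(re^{i\theta}) \to 1$ and $h(\theta) = 0$. Either way $h(\theta) \geq 0$. By the continuity of $h$---it is semidifferentiable as noted just after Theorem \ref{zeroes of entire functions}---this conclusion extends to the closure of $\{h_g \neq 0\}$.

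The main obstacle is the interior of $\{h_g = 0\}$, should it be nonempty: here $|g|$ is merely subexponential along $I$, and the additive calculus no longer pins down $h$. In this regime $h(\theta) < 0$ could only occur if $g(re^{i\theta}) \to -1$ at exponential rate $e^{-cr^\rho}$. I plan to rule this out by combining the elementary inequality $|F(z)| + |g(z)| \geq |F(z) - g(z)| = 1$, which forces $\max(|F(z)|, |g(z)|) \geq 1/2$ pointwise, with the evenness $F(z) = F(-z)$ from property (3), which yields $h(\theta) = h(\theta+\pi)$. If $h(\theta_0) < 0$, then by continuity $h < 0$ on an open arc around $\theta_0$, and by the evenness also on the antipodal arc around $\theta_0 + \pi$; in the corresponding two antipodal sectors $g$ would then be uniformly bounded and close to $-1$. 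A Phragm\'en-Lindel\"of argument on the two complementary sectors, combined with the assumption that $\widehat V$ has order $\rho$ and normal type, should then yield a contradiction. Closing this argument when the antipodal arcs are too narrow for Phragm\'en-Lindel\"of to apply directly is the main technical hurdle, and would likely require exploiting the specific product structure of $g = \widehat V(2z)\widehat V(-2z)$.
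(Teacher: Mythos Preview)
Your approach coincides with the paper's: both write $F=1+g$ with $g(z)=\widehat V(2z)\widehat V(-2z)$, compute $h_g(\theta)=2^{\rho}\bigl(H(\theta)+H(\theta+\pi)\bigr)$ from the indicator $H$ of $\widehat V$, and split according to the sign of $h_g$. For $h_g(\theta)\neq 0$ your argument and the paper's are identical and correct. The divergence is only in the region where $h_g\equiv 0$ on an open arc. The paper does \emph{not} supply the careful argument you are searching for there; it simply chains asymptotics to reach $|F(re^{i\theta})|\approx 1+\exp\bigl(h_g(\theta)r^{\rho}\bigr)$ and reads off $h=(h_g)_+$. That step silently replaces $|1+g|$ by $1+|g|$ and is not justified---in fact the conclusion $h_{1+g}=(h_g)_+$ fails for general even CRG functions: with $g(z)=-1+e^{-z^2}$ one has $h_g=(-\cos 2\theta)_+$, yet $F=e^{-z^2}$ has $h_F(0)=-1<0$.

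So your caution about the interior of $\{h_g=0\}$ is well placed, and your Phragm\'en--Lindel\"of idea is a reasonable attempt at a repair; you are also right that it does not close when the antipodal bad arcs are narrow relative to $\pi/\rho$. This is a genuine gap in your proposal, but it is the \emph{same} gap the paper leaves open behind its asymptotic shorthand. A rigorous proof would have to use more of the specific product structure $g=\widehat V(2\cdot)\widehat V(-2\cdot)$ than bare evenness (your counterexample-free instinct to exploit this is the right direction), so you are not missing an idea that the paper provides.
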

\begin{proof}
Let $H$ be the indicator function of $\widehat V$.
Since $\widehat V$ has completely regular growth,
$$\log |\widehat V(re^{i\theta})| \approx H(\theta)r^\rho.$$
Moreover, if $T$ is any continuous function and $f \approx g$ then $T(f) \approx T(g)$, so $|\widehat V(re^{i\theta})| \approx \exp(r^\rho H(\theta))$ and hence
$$|F(re^{i\theta})| \approx 1 + \exp(2^\rho r^\rho(H(\theta) + H(\pi + \theta))).$$
Therefore
$$\log |F(re^{i\theta})| \approx \begin{cases}
0 &\text{ if $H(\theta) + H(\pi + \theta) < 0$}\\
\log 2 &\text{ if $H(\theta) + H(\pi + \theta) = 0$}\\
2^\rho r^\rho (H(\theta) + H(\pi + \theta)) &\text{ else}
\end{cases}$$
so
$$h(\theta) = 2^\rho (H(\theta) + H(\pi + \theta))_+ \geq 0,$$
which completes the proof.
\end{proof}

\begin{proof}[Proof of Proposition \ref{linear lower bound}]
We first remark that $\rho \geq 1$, a consequence of the Paley-Wiener-Schwartz theorem.
Indeed, if $V$ is compactly supported, then $\rho = 1$; otherwise, either $\rho > 1$ or the type of $V$ is $0$; the latter is excluded by Definition \ref{completely regular growth}.

By Froese's conjecture and Lemma \ref{zeroes of entire functions with jump discontinuity}, it suffices to show that either $\pi \in Z$ or there is an angle $\theta \in [\pi, 2\pi]$ such that $s(\pi, \theta)$ is nonzero.

To do this, we first show that $s(0, \cdot)$ is not identically zero. Suppose that it is. Then
$$
h'(\varphi) = h'(\theta) + \rho^2 \int_\varphi^\theta h(\alpha) ~d\alpha,
$$
yet $h$ is continuous and $\theta$ is fixed, so $h' \in C^1$ and so $h^{(2)} = -\rho^2 h$,
so there are constants $c_\pm$ such that
$$h(\varphi) = c_+e^{i\rho\varphi} + c_-e^{-i\rho\varphi}.$$
Since $F$ has completely regular growth, $F$ is of normal type, so $h$ is not identically zero.
Since $h$ is real-valued, this implies that $h$ has a simple zero in $(0, 2\pi)$.
Therefore $h$ is not nonnegative, contradicting Lemma \ref{no trigonometric indicators}.


So either $0 \in Z$ or there is an angle $\theta \in [0, 2\pi]$ such that either $s(0, \theta) \neq 0$.
Using the reflection symmetry $F(z) = F(-z)$, either $\pi \in Z$ or we may replace $\theta$ with a $\theta \in [\pi, 2\pi]$ such that $s(\pi, \theta) \neq 0$, if necessary.
\end{proof}

\section{Divergence of $B(V)$}
\label{divergence section}
Assume that $V$ is noncompactly supported; we are ready to prove that $B(V)$ diverges. We recall that there were four sufficient conditions to check; any one would imply that $B(V)$ diverges.
But Case \ref{holomorphic potential} reduces to Case \ref{resonances in sectors}: if $V$ is holomorphic in a conic neighborhood of $\RR$, then there are only finitely many resonances in a conic neighborhood of $\RR$ \cite[Corollary 12.14]{sjostrand2002lectures}.

Similarly, Case \ref{limit of s} reduces to Case \ref{resonances in sectors}: if $s(k\pi, \theta)$ exists, then $h$ is differentiable at $k\pi$, so $k\pi \notin Z$;
then Lemma \ref{characterizing jump discontinuity} implies that if $\beta < k\pi$ then $s(\beta, \cdot)$ is continuous at $k\pi$ and hence
$$\lim_{\varepsilon \to 0} \lim_{r \to \infty} \frac{N(r, k\pi - \varepsilon, k\pi + \varepsilon)}{r^\rho} = \frac{1}{2\pi\rho} \lim_{\varepsilon \to 0} s(k\pi - \varepsilon, k\pi + \varepsilon) = 0,$$
by Theorem \ref{zeroes of entire functions}. Since $N(r) \sim r^\rho$ by Proposition \ref{linear lower bound}, this implies Case \ref{resonances in sectors}.
%

\begin{lemma}
All but finitely many resonances of $V$ are in the lower-half plane $\CC_-$.
\end{lemma}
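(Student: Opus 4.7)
The plan is to show that only finitely many poles of $R_V$ lie in the closed upper half-plane $\overline{\CC_+}$; the lemma then follows on noting that every resonance outside $\CC_-$ lies in $\overline{\CC_+}$. I would split the argument into poles in $\CC_+$ and poles on $\RR$.

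First, for $\lambda \in \CC_+$ I would observe that the meromorphically continued $R_V(\lambda)$ coincides with the ordinary $L^2$-resolvent $(-D^2+V-\lambda^2)^{-1}$ of the self-adjoint operator $-D^2+V$ (taking $V$ real-valued, the setting of the Breit-Wigner formula). Since $\Im(\lambda^2)=2\,\Re\lambda\cdot\Im\lambda$, one has $\lambda^2 \notin \RR$ unless $\lambda \in i\RR_+$, so the only possible poles in $\CC_+$ lie on the positive imaginary axis at points $\lambda=i\mu$ where $-\mu^2$ is a negative eigenvalue of $-D^2+V$. Super-exponential decay gives $\int(1+|x|)\,|V(x)|\,dx < \infty$, so Bargmann's bound produces only finitely many such bound states, hence only finitely many poles in $\CC_+$.

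Next I would rule out all but finitely many real resonances. A pole of the continuation at $\lambda \in \RR \setminus \{0\}$ would correspond to an embedded eigenvalue of $-D^2+V$ in $(0,\infty)$, which is excluded for super-exponentially decaying potentials by classical absence-of-embedded-eigenvalue results of Kato and Agmon type. At $\lambda=0$ one may have a single threshold resonance, contributing at most one additional pole. Combining this with the previous paragraph yields finiteness of the resonance set in $\overline{\CC_+}$.

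The main obstacle is not technical but bibliographic: the hard work is done by the two classical inputs invoked above, namely the Bargmann bound on the number of bound states and the absence-of-embedded-eigenvalue theorem for rapidly decaying one-dimensional Schr\"odinger operators. The genuinely spectral-theoretic content of the argument is only the identification of the upper-half-plane poles of $R_V$ with negative eigenvalues via $\lambda \mapsto \lambda^2$, which is immediate from self-adjointness.
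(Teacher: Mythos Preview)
Your argument is correct for real $V$ but follows a genuinely different route from the paper's. The paper works with the Fredholm determinant $D(\lambda)=\det(1+\sqrt V\,R_0(\lambda)\sqrt{|V|})$: since $\sqrt V\,R_0(\lambda)\sqrt{|V|}$ is holomorphic $\CC_+\to\mathcal B^1(L^2)$ and its trace norm tends to $0$ as $|\lambda|\to\infty$ in $\CC_+$, one has $D(\lambda)\to 1$ there, forcing the zero set of $D$ in $\CC_+$ to be finite. That argument is purely complex-analytic, uses no self-adjointness, and therefore covers complex-valued $V$; it is also the natural choice in context, since the paper is already working inside Froese's determinant framework. Your approach instead leverages the self-adjoint structure of $-D^2+V$, reducing the count in $\CC_+$ to a bound-state count via Bargmann and disposing of $\RR\setminus\{0\}$ by absence of positive eigenvalues. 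The price is the restriction to real $V$, which you acknowledge.

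One small caveat on your treatment of the real axis: the claim that a pole of the continued resolvent at $\lambda\in\RR\setminus\{0\}$ \emph{is} an embedded eigenvalue is not automatic. A priori the associated resonant state is only outgoing at infinity, not $L^2$; one needs the standard boundary-pairing (Wronskian) argument, valid for real $V$, to force square-integrability before the Kato--Agmon theorem can be invoked. This is routine, but it is a genuine step rather than a definition-chase.
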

\begin{proof}
This is well-known, but we sketch the proof; see \cite[\S3]{froese1997asymptotic} or \cite[Lemma 3.23]{backus2020conjecture} for the details.
Let $\mathcal B^1(\mathcal H)$ denote the trace class of $\mathcal H = L^2(\supp V)$. Choosing an appropriate branch of $\sqrt\cdot$, we may identify resonances with the zeroes of the function
$$D(\lambda) = \det(1 + \sqrt V R_0(\lambda) \sqrt{|V|}),$$
which is holomorphic in the upper-half plane $\CC_+$ since $\sqrt V R_0 \sqrt{|V|}$ is holomorphic $\CC_+ \to \mathcal B^1(\mathcal H)$. Moreover, $D(\lambda) \to 1$ as $\lambda \to \infty$ along any ray in $\CC_+$,
so there are only finitely many zeroes of $D$ in $\CC_+$.
\end{proof}
Therefore we may replace $B(V)$ with a sum over \emph{only the resonances in $\CC_-$} without affecting its convergence properties, so that all summands in $B(V)$ are positive.

\begin{lemma}
\label{divergence of angular series}
Suppose that $\pi < \theta \leq \varphi < 2\pi$. If $n(r, \theta, \varphi) \gtrsim r$, then $B(V)$ diverges.
\end{lemma}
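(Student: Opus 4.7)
The plan is to bound $B(V)$ below by the contribution of resonances lying in the sector $A(r,\theta,\varphi)$ and show that this partial sum already diverges. The excerpt has just observed that, after removing finitely many resonances, every $\lambda \in \Res V$ lies in $\CC_-$, so all summands in $B(V)$ are nonnegative and discarding the sectors outside of $A(r,\theta,\varphi)$ cannot help $B(V)$ converge.

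First I would exploit the strict inequalities $\pi < \theta \leq \varphi < 2\pi$: the function $\alpha \mapsto |\sin\alpha|$ is continuous and strictly positive on the compact interval $[\theta,\varphi]$, so it admits a positive minimum $c > 0$ there. Writing a resonance in the sector as $\lambda_j = r_j e^{i\alpha_j}$ with $\alpha_j \in [\theta,\varphi]$, this gives
$$-\frac{\Im \lambda_j}{|\lambda_j|^2} \;=\; -\frac{\sin\alpha_j}{r_j} \;=\; \frac{|\sin\alpha_j|}{r_j} \;\geq\; \frac{c}{r_j},$$
so it suffices to prove $\sum_j 1/r_j = \infty$, where the sum ranges over resonances in the sector (all of which lie in $\CC_-$ after discarding finitely many).

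To establish this, I would apply Stieltjes integration by parts against the counting function $N(r) := n(r,\theta,\varphi)$. By hypothesis there exist $R_0$ and $C > 0$ with $N(r) \geq Cr$ for all $r \geq R_0$. Since only finitely many resonances have modulus $\leq R_0$, one has for $R > R_0$
$$\sum_{R_0 < r_j \leq R} \frac{1}{r_j} \;=\; \frac{N(R)}{R} - \frac{N(R_0)}{R_0} + \int_{R_0}^R \frac{N(r)}{r^2}\, dr \;\geq\; \int_{R_0}^R \frac{C}{r}\, dr \;=\; C \log(R/R_0),$$
which tends to $+\infty$ as $R \to \infty$. Combined with the uniform bound $|\sin\alpha_j| \geq c$, this forces $B(V) = +\infty$.

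I do not expect any serious obstacle: the argument is a textbook Abel summation. The only point that must be verified is that the sector is genuinely separated from the real axis, which is exactly the content of the strict inequalities $\pi < \theta$ and $\varphi < 2\pi$ in the hypothesis. Without this separation the factor $|\sin\alpha_j|$ could degenerate along the resonance sequence and the argument would collapse, which is precisely why the later analysis of Theorem \ref{divergence of breit wigner, preliminary version} must work to exclude resonances clustered near $\RR$.
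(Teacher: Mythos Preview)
Your proof is correct and follows essentially the same route as the paper: both bound $-\Im\lambda/|\lambda|^2$ below by a constant times $1/|\lambda|$ using the positive minimum of $|\sin\alpha|$ on the closed subinterval $[\theta,\varphi]\subset(\pi,2\pi)$, and then apply Abel summation against the sector counting function (the paper sums by parts at integer radii, you use the Stieltjes integral form) to extract a divergent $\log R$. The only slip is that your displayed inequality silently drops the boundary contribution $N(R)/R - N(R_0)/R_0$, which need not be nonnegative; however it is bounded, so this does not affect the conclusion.
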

\begin{proof}
Let $k_j = n(j, \theta, \varphi)$, so that $k_j \gtrsim j$.
Let $\Res^* V$ be the set of resonances $re^{i\xi}$ such that $\theta \leq \xi \leq \varphi$. Then
\begin{align*}
B(V) &\geq -\sum_{\lambda \in \Res^* V} \frac{\Im \lambda}{|\lambda|^2}  \geq \min(-\sin \theta, -\sin \varphi) \sum_{\lambda \in \Res^* V} \frac{1}{|\lambda|}\\
  & \gtrsim_{\theta,\varphi} \sum_{j=0}^\infty \sum_{\substack{\lambda \in \Res^* V\\|\lambda| \in [j, j+1)}} \frac{1}{j+1}
  = \sum_{j=0}^\infty \frac{k_{j+1} - k_j}{j+1}.
\end{align*}
Summing by parts,
\begin{align*}
\sum_{j=0}^J \frac{k_{j+1} - k_j}{j+1} &= \frac{k_{J+1}}{J+1} - \sum_{j=1}^J k_j\left(\frac{1}{j+1} - \frac{1}{j}\right)\\
&=\frac{k_{J+1}}{J+1} + \sum_{j=1}^J \frac{k_j}{j + j^2} \gtrsim 1 + \sum_{j=1}^J \frac{1}{j}
\end{align*}
which $\to \infty$ as $J \to \infty$.
\end{proof}

In Case \ref{resonances in sectors},
the resonances $\lambda$ furnished by Proposition \ref{linear lower bound} will satisfy $-\sin \arg \lambda > \delta$ for some sufficiently small $\delta > 0$,
so by Lemma \ref{divergence of angular series}, $B(V)$ diverges.

Finally, we prove Case \ref{s is not defined}. By reflection, we can assume that $\pi < \theta \leq \varphi < 2\pi$.
By Lemma \ref{zeroes of entire functions with jump discontinuity}, Froese's conjecture, and Proposition \ref{linear lower bound},
$$n(r, \theta, \varphi) \sim N(r, \theta, \varphi) \gtrsim r.$$
Thus Lemma \ref{divergence of angular series} completes the proof of Theorem \ref{divergence of breit wigner, preliminary version}.

\printbibliography

\end{document}